\DeclareMathAlphabet{\mathcal}{OMS}{cmsy}{m}{n}
\DeclareMathAlphabet{\mathbbold}{U}{bbold}{m}{n}  
\theoremstyle{plain}
\newtheorem{thm}{Theorem}[section]
\newtheorem{lm}[thm]{Lemma}
\newtheorem{cor}[thm]{Corollary}
\theoremstyle{remark}
\newtheorem{rmk}{Remark}
\theoremstyle{definition}
\newcommand{\bnu}{\begin{enumerate}}
\newcommand{\enu}{\end{enumerate}}
\newcommand{\bpf}{\begin{proof}}
\newcommand{\epf}{\end{proof}}
\newcommand{\sset}{\subset}
\newcommand{\om}{\omega}
\newcommand{\Om}{\Omega}
\newcommand{\ep}{\epsilon}
\newcommand{\tht}{\theta}
\newcommand{\vp}{\varphi}
\newcommand{\de}{\delta}
\newcommand{\bbz}{\mathbb{Z}}
\newcommand{\bbr}{\mathbb{R}}
\newcommand{\f}{\frac}
\newcommand{\nf}{\infty}
\newcommand{\tf}{\tfrac}
\newcommand{\wh}{\widehat}
\newcounter{question}
\newcommand{\qt}{%
        \stepcounter{question}%
        \thequestion}
\newcommand{\bq}{\fbox{Q\qt}\ }
\newcommand{\les}{\lesssim}
\begin{document}

\author[P. Chen]{Peng Chen}
\address{Department of Mathematics, Sun Yat-sen  University, Guangzhou, 510275, P. R. China}
\email{chenpeng3@mail.sysu.edu.cn}

\author[D. He]{Danqing He}
\address{Department of Mathematics, Sun Yat-sen  University, Guangzhou, 510275, P. R. China}
\email{hedanqing@mail.sysu.edu.cn}

\author[L. Song]{Liang Song}
\address{Department of Mathematics, Sun Yat-sen  University, Guangzhou, 510275, P. R. China}
\email{songl@mail.sysu.edu.cn}

\title{Weighted inequalities of bilinear rough singular integrals}
\date{}

\thanks{
The first author was supported by NNSF of China (No. 11501583), Guangdong Natural Science Foundation
(No. 2016A030313351) and the Fundamental Research Funds for the Central Universities (No. 161gpy45). 
The second author was supported by the
Fundamental Research Funds for the Central Universities (No. 20173400031610259).
The third author was supported by NNSF of China (No. 11471338  and 11622113).}
 
\maketitle

\begin{abstract}
We establish a quantitative weighted inequality for the
bilinear rough singular integral, where the 
bound is controlled by the cube of the  weight constant.

\end{abstract}




\section{Introduction}
The optimal norms  of linear and multilinear weighted
inequalities of kinds of operators attracted a lot of attention in  past decades.
The $A_2$ conjecture asks if the weighted norm of a (smooth) Calder\'on-Zygmund operator
depends on the weight constant $[\om]_{A_2}$ linearly. This was solved by Hyt\"onen \cite{Hytonen2012}.
Lerner \cite{Lerner2013} attacked this problem later using  the sparse operators.
That is why this method is often referred as the sparse method/control.
A even shorter proof of the sparse control for singular integral operators with kernels
satisfying the Dini condition was given by Lacey \cite{Lacey2017}, which inspired a lot of other papers, 
\cite{Culiuc2016}, \cite{Krause2016}, \cite{Conde2016}, and \cite{Lacey2016}, just to name  a few.

There is a natural question after the solution to the $A_2$ conjecture. Does the weighted norm of a rough
singular integral depend on the weight constant linearly as well? This question is partially answered by Hyt\"onen, Roncal, and Tapiola \cite{Hytonen2015}, who proved that the weighted norm of a rough
singular integral is bounded by 
$C[\om]_{A_2}^2$.
Their method depends on a modification of a classical dyadic decomposition, see \cite{DRdF} for instance, and Lacey \cite{Lacey2017}.

In this note we generalize the result of \cite{Hytonen2015}  to the bilinear setting.

A bilinear rough singular integral is defined by
\begin{equation}
T_\Om(f,g)(x)
=\int_{\bbr^{2n}}\f{\Om((y,z)')}{|(y,z)|^{2n}}f(x-y)g(x-z)dydz,
\end{equation}
where $(y,z)'=\tf{(y,z)}{|(y,z)|}\in\mathbb S^{2n-1}$, the unit sphere in $\mathbb R^{2n}$,
and $\Om$ is an $L^\nf$ function defined on $\mathbb S^{2n-1}$ with vanishing integral, namely
$\int_{\mathbb S^{2n-1}}\Om=0$.
The boundedness of this operator goes back to Coifman and Meyer
\cite{Coifman1975}, which was  proved for all points except for the endpoints by Grafakos, He, and Honz\'ik  \cite{Grafakos2015}.

We are interested in the weighted norm inequality for $T_\Om$, namely
\begin{equation}\label{e05231}
\|T_\Om(f,g)\|_{L^1(\nu)}
\le C_{[\vec\om]_{A_{(2,2)}}}
\|f\|_{L^2(\om_1)}
\|g\|_{L^2(\om_2)},
\end{equation}
where $(\om_1,\om_2)$ is an $A_{(2,2)}$ weight; see \eqref{e07058} below
for the definition.
We are concerning how the constant $C_{[\vec\om]_{A_{(2,2)}}}$
depends on $[\vec\om]_{A_{(2,2)}}$.

The
weighted norm inequality for bilinear rough singular integrals has been addressed by
\cite{Cruz-Uribe2016}
and
\cite{Barron2017}.
Cruz-Uribe and Naibo \cite{Cruz-Uribe2016} obtained the first weighted inequality of bilinear rough singular integrals
via interpolation between measures.
Barron \cite{Barron2017} 
obtained a sparse control of $T_\Om$, which implies that
the weighted norm of a bilinear rough singular integral depends on the weight constant of the
bilinear weight. However, no explicit expression was provided in both papers for the 
classical multiple weights introduced in \cite{Lerner2009}. 

We choose a different way here to give
an explicit expression showing how the weighted norm $\|T_\Om\|_{L^{2}(\om_1)\times L^{2}(\om_2)\to
L^1(\nu)}$ depends on the corresponding weight constant.
Our method could be modified to other points $(p_1,p_2,p)$ beyond $(2,2,1)$ we study
in this note, but we will not pursue them here since even for the $(2,2,1)$ 
case we cannot obtain the best result,
which we conjecture as $[\om_1,\om_2]_{A_{(2,2)}}$.
The reader will find that our method relies heavily on the idea
Hyt{\"o}nen, P{\'e}rez, and Rela
 \cite{Hytonen2015}
used to handle the linear version.

Our main result is the following theorem.

\begin{thm}\label{07052}

Let $T_\Om$ be a bilinear rough singular integral operator with $\Om\in L^\nf(\mathbb S^{2n-1})$ and $\int_{\mathbb S^{2n-1}}\Om=0$,
then 
\begin{equation}\label{e07056}
\|T_\Om(f,g)\|_{L^1(\nu)}
\le C[\om_1,\om_2]^3_{A_{(2,2)}}\|f\|_{L^2(\om_1)}
\|g\|_{L^2(\om_2)}
\end{equation}
whenever $(\om_1,\om_2)\in A_{(2,2)}$ and $\nu=\om_1^{1/2}\om_2^{1/2}$.

\end{thm}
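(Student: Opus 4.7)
The plan is to adapt the strategy of Hyt\"onen, Roncal, and Tapiola \cite{Hytonen2015} to the bilinear setting. The overall structure of the proof is: (i) a dyadic Littlewood--Paley decomposition of the rough kernel, (ii) a bilinear sparse-form bound for each piece with $L^r$-averages for $r>1$, and (iii) a quantitative weighted estimate for bilinear sparse forms, combined with a summation in the dyadic parameter.

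For step (i), I would pick a smooth partition of unity $\{\psi_j\}_{j\ge 0}$ on $\bbr^{2n}\setminus\{0\}$ with $\psi_j$ supported in $\{|(y,z)|\sim 2^j\}$, and decompose $T_\Om=\sum_{j\ge 0}T_j$, where $T_j$ has kernel $K(y,z)\psi_j(y,z)$ with $K(y,z)=\Om((y,z)')|(y,z)|^{-2n}$. The cancellation $\int_{\mathbb S^{2n-1}}\Om=0$ together with a standard Fourier-analytic argument (see \cite{DRdF}) yields the unweighted decay $\|T_j\|_{L^2\times L^2\to L^1}\ls 2^{-\delta j}\|\Om\|_{L^\nf}$ for some $\delta>0$.

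For step (ii), following a Lacey-type stopping-time scheme \cite{Lacey2017} adapted to the bilinear setting as in \cite{Barron2017}, I aim to establish, for each $r>1$ and each $j$, a sparse-form bound
\[
|\langle T_j(f,g), h\rangle|\ls A(j,r)\sum_{Q\in\mc S}|Q|\,\langle |f|\rangle_{r,Q}\,\langle |g|\rangle_{r,Q}\,\langle |h|\rangle_{1,Q}
\]
with some sparse collection $\mc S$, where $A(j,r)$ grows at most polynomially in $j$ and blows up as $r\to 1^+$. The crucial technical input is a quantitative weak-type estimate for the bilinear grand maximal truncation attached to $T_j$, measured with respect to $L^r$-averages; this is the bilinear analogue of the weak-type bound carried out in \cite{Hytonen2015} for the linear case.

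For step (iii), the quantitative weighted bound for bilinear sparse forms acting on $A_{(2,2)}$ weights gives, provided $r$ is chosen sufficiently close to $1$,
\[
\sum_{Q\in\mc S}|Q|\,\langle |f|\rangle_{r,Q}\,\langle |g|\rangle_{r,Q}\,\langle |h|\rangle_{1,Q}\ls [\om_1,\om_2]_{A_{(2,2)}}^{3}\|f\|_{L^2(\om_1)}\|g\|_{L^2(\om_2)}\|h\|_{L^\nf(\nu^{-1})}.
\]
Summing over $j$ and balancing the unweighted decay $2^{-\delta j}$ from step (i) against the polynomial growth of $A(j,r)$ in step (ii) then yields \eqref{e07056}. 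The main obstacle will be step (ii): producing the sparse-form bound with a sharp polynomial $j$-dependence at the endpoint $(p_1,p_2,p)=(2,2,1)$, which demands a delicate bilinear weak-type control on the grand maximal truncation; a secondary but nontrivial issue is calibrating $r=r(j)$ together with the sparse weighted bound so that the final power of the weight constant is precisely $3$.
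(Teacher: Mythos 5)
Your plan takes a genuinely different route from the paper, and it leaves the two hardest steps unresolved. You propose to follow the Barron/Conde-Alonso--Culiuc--Di Plinio--Ou template of sparse forms with $L^r$-averages, $r>1$, for each Littlewood--Paley piece $T_j$, and then feed those into a quantitative weighted bound for $L^r$-sparse forms. The paper does something else: it never works with $L^r$-averages. Instead it (a) regroups the $T_j$ into super-blocks $\tilde T_\ell=\sum_{N(\ell-1)\le j\le N(\ell)}T_j$ with $N(\ell)=2^\ell$, following Hyt\"onen--P\'erez--Rela, precisely to improve the Dini constant of each block from $N(\ell)^2$ to $N(\ell)$ (Lemma~\ref{07051}); (b) invokes Li's $L^1$-average sparse domination for bilinear Dini--Calder\'on--Zygmund operators (Lemma~\ref{11192}), which yields the weighted bound $C[\vec\om]_{A_{(2,2)}}\cdot N(\ell)$ for each $\tilde T_\ell$; (c) exploits the sharp reverse H\"older property in the bilinear setting, namely that $(\om_1^{1+r},\om_2^{1+r})\in A_{(2,2)}$ with comparable constant provided $r\sim[\vec\om]_{A_{(2,2)}}^{-2}$ (this sharp range, established in Lemma~\ref{08251}, is the source of the final cube); and (d) runs a Stein--Weiss interpolation between measures (Lemma~\ref{11194}) between the weighted estimate for the powered-up weights and the unweighted $2^{-N(\ell)\delta'}$ decay, then sums in $\ell$. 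This avoids ever having to quantify $L^r$-sparse forms.

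The gap in your proposal is that both of your declared ``main obstacles'' are genuinely unresolved and are precisely where the difficulty lies. For step (ii), obtaining a bilinear sparse-form bound with sharp, \emph{explicit} $(j,r)$-dependence at the endpoint $(2,2,1)$ via a grand-maximal-truncation argument is exactly what Barron did not extract (the paper notes that no explicit expression for the weight-constant dependence was provided there). For step (iii), a quantitative weighted bound for bilinear sparse forms with $L^r$-averages in terms of $[\vec\om]_{A_{(2,2)}}$ would require its own bilinear reverse H\"older machinery calibrated against $r$, and the paper shows (Lemma~\ref{08251} and the remarks around it) that the sharp admissible $r$ is only $\sim[\vec\om]_{A_{(2,2)}}^{-2}$, not $[\vec\om]_{A_{(2,2)}}^{-1}$ as one might naively hope. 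Without these two ingredients made precise, your scheme does not yet yield the cube exponent; it is a reasonable program, but the paper's actual proof bypasses it by combining Li's $L^1$-sparse bound with the Hyt\"onen--P\'erez--Rela block decomposition and measure interpolation. If you want to continue along your line, you should expect the $L^r$-sparse route to produce, at best, the same exponent, and you would still need the sharp $r\sim[\vec\om]_{A_{(2,2)}}^{-2}$ input.
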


\section{ Reverse H\"older inequalities}

Let us recall some basic definitions. A local integrable nonnegative function
$\om$ is an $A_p$ weight for $1< p<\nf$ if 
$$
[\om]_{A_p}=\sup_Q\fint_Q\om(\fint_Q\om^{1-p'})^{p-1}<\nf.
$$
The constant $[\om]_{A_p}$ is referred as the weight constant of $\om$.
For the case $p=\nf$, we take the definition
$$[\om]_{B_\nf}=\sup_Q\f1{\om(Q)}\int_QM(\om\chi_Q)<\nf;$$
see \cite{Hytonen2012a} for instance. 
The class $B_\nf$ coincides with the classical weight class $A_\nf=\cup_{1\le p<\nf}A_p$ 
since $1\le [\om]_{B_\nf}\le C[\om]_{A_p}$ (see \cite{Hytonen2013a})
and $\om\in B_\nf$ satisfies the reverse H\"older inequality (Lemma~\ref{11301}).
For some technical reason, we introduce also
$$(\om)_{A_p}
=\max ([\om]_{B_\nf},[\om^{1-p'}]_{B_\nf})
\le C [\om]_{A_p}^{\max(1,\tf{p'}p)}.$$

A remarkable property of weights is that they satisfy the reverse H\"older inequality (RHI), which states that there exists a positive
$\ep$ such that 
$$\fint_{Q_0}\om^{1+\ep}dx\le C\big(\fint_{Q_0}\om dx\big)^{1+\ep}.
$$
As a simple corollary we see that $\om^{1+\ep}\in A_p$ when $\om\in A_p$.

A range of $\ep$ is given by the following lemma proved in \cite{Hytonen2012a}.

\begin{lm}[{\cite[Theorem 2.3]{Hytonen2012a}}]\label{11301}
Let $\om\in B_\nf$ and let $Q_0$ be a cube. Then 
\begin{equation}\label{e070510}
\fint_{Q_0}\om^{1+\ep}dx\le 2\big(\fint_{Q_0}\om dx\big)^{1+\ep}
\end{equation}
for any $\ep>0$ such that $0<\ep\le\f1{2^{n+1}[\om]_{B_\nf}-1}$.

In particular, for $p\ge 2$, \eqref{e070510} holds for $\ep\le C [\om]_{A_p}^{-1}$.



\end{lm}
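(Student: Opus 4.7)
My approach follows the method of Hyt\"onen and P\'erez \cite{Hytonen2012a}: I combine a Calder\'on--Zygmund stopping-time decomposition of $\om$ with the defining inequality of $[\om]_{B_\nf}$. By homogeneity I may normalize $|Q_0|=1$ and write $a=\fint_{Q_0}\om$. Expressing the left-hand side of \eqref{e070510} in layer-cake form,
$$\int_{Q_0}\om^{1+\ep}dx=(1+\ep)\int_0^\nf\la^\ep\,|\{x\in Q_0:\om(x)>\la\}|\,d\la,$$
the contribution of the range $\la\le a$ is at most $a^{1+\ep}$, which already has the desired form, so the real task is to absorb the tail $\la>a$ into a second copy of $a^{1+\ep}$.

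For each $\la>a$ I would run a Calder\'on--Zygmund decomposition of $\om$ inside $Q_0$ at level $\la$, obtaining disjoint maximal dyadic subcubes $\{Q_j^\la\}$ with $\la<\fint_{Q_j^\la}\om\le 2^n\la$, $\om\le\la$ almost everywhere on $Q_0\setminus\bigcup_j Q_j^\la$, and $\{\om>\la\}\cap Q_0\subseteq\bigcup_j Q_j^\la$ up to a null set. Every $x\in\bigcup_j Q_j^\la$ satisfies $M(\om\chi_{Q_0})(x)>\la$, so Chebyshev combined with the definition of $[\om]_{B_\nf}$ yields
$$\Big|\bigcup_j Q_j^\la\Big|\le\f{1}{\la}\int_{Q_0}M(\om\chi_{Q_0})\,dx\le\f{[\om]_{B_\nf}\om(Q_0)}{\la}.$$
Iterating the decomposition inside every stopping cube, at levels $2^{n+1}[\om]_{B_\nf}$ times the parent's average, builds a tree of principal cubes whose total measure shrinks by the factor $\tf{1}{2}$ at each generation, while the local averages of $\om$ grow by at most $2^{n+1}[\om]_{B_\nf}$ per generation.

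Summing the contributions of the annular pieces $E_k$ on which $\om$ is controlled pointwise by $(2^{n+1}[\om]_{B_\nf})^{k+1}a$ turns the tail into a geometric series of ratio $(2^{n+1}[\om]_{B_\nf})^\ep/2$. Taking $\ep\le(2^{n+1}[\om]_{B_\nf}-1)^{-1}$ forces this ratio below $1$ and, after the elementary estimate $(1+s)^\ep\le 1+2\ep s$ for small $s$, produces exactly the constant $2$ on the right of \eqref{e070510}; the ``in particular'' assertion then follows by substituting the comparison $[\om]_{B_\nf}\le C[\om]_{A_p}$ (valid for $p\ge 2$) into this admissible range. The main obstacle is this delicate calibration: a one-shot Calder\'on--Zygmund argument (no iteration) yields only a much weaker exponent, so iteration is essential, and the exponent is optimized precisely when the geometric-series ratio hits $\tf{1}{2}$, which pins down the threshold $\ep\le(2^{n+1}[\om]_{B_\nf}-1)^{-1}$.
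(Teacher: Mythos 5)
The paper does not actually prove this lemma; it is cited verbatim from \cite[Theorem 2.3]{Hytonen2012a}, so there is no in-paper proof to compare against. Evaluated on its own terms, your sketch has a genuine gap at the crucial geometric-series step. With a stopping level $L\cdot\fint_{Q}\omega$ relative to each parent $Q$, the children have total Lebesgue measure $\le ([\om]_{B_\nf}/L)|Q|$ (by the Chebyshev argument you cite), while their averages grow by the factor $2^nL$. Writing $E_k=\Om_k\setminus\Om_{k+1}$ for the annular piece between generations and estimating $\int_{E_k}\om^{1+\ep}\le\big(\sup_{E_k}\om\big)^{1+\ep}|E_k|$ (or the equivalent $(\sup\om)^\ep\cdot\int_{E_k}\om$ with $\int_{E_k}\om$ bounded by $(\sup\om)\cdot|E_k|$), the ratio from $k$ to $k+1$ is
$$
(2^nL)^{1+\ep}\cdot\frac{[\om]_{B_\nf}}{L}=2^{n(1+\ep)}L^{\ep}[\om]_{B_\nf}\ge 2^n\ge 2,
$$
regardless of the choice of $L\ge 1$ and for every $\ep\ge 0$, because $[\om]_{B_\nf}\ge 1$. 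So the series never converges. Your stated ratio $(2^{n+1}[\om]_{B_\nf})^{\ep}/2$ would require $\int_{E_k}\om\lesssim a\cdot 2^{-k}$, i.e.\ geometric decay of the $\om$-measure of the generations; but the $B_\nf$ condition alone does not give that, only Lebesgue-measure decay, and $\om$ is exactly as large as it is allowed to be on $E_k$.

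The actual argument in \cite{Hytonen2012a} avoids the principal-cube tree entirely. It bounds $\int_{Q_0}\om^{1+\ep}\le\int_{Q_0}\om\,(M^d_{Q_0}\om)^\ep$, applies the layer-cake formula to $(M^d_{Q_0}\om)^\ep$, and on the tail $\la>\om_{Q_0}$ does a single Calder\'on--Zygmund decomposition per level $\la$. The key step is the self-referential estimate obtained from
$$
\int_{Q_0}(M^d_{Q_0}\om-\la)_+\,dx=\sum_j\int_{Q_j^\la}M^d_{Q_j^\la}\om\,dx\le[\om]_{B_\nf}\,\om\big(\{M^d_{Q_0}\om>\la\}\big),
$$
which uses the $B_\nf$ constant on each stopping cube and the localization $M^d_{Q_0}\om=M^d_{Q_j^\la}\om$ on $Q_j^\la$. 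Plugging this back in and integrating by parts produces an inequality of the form $J\le a^\ep([\om]_{B_\nf}-1)a+\ep\,2^n[\om]_{B_\nf}\,J$ for the tail $J$, which can be absorbed precisely when $\ep 2^n[\om]_{B_\nf}<1$; this is where the threshold $\ep\le(2^{n+1}[\om]_{B_\nf}-1)^{-1}$ and the constant $2$ come from. So the mechanism is absorption of a self-referential quantity via the $B_\nf$ condition applied at every scale simultaneously, not a convergent geometric series over generations of principal cubes. The ``in particular'' clause then follows, as you say, from $[\om]_{B_\nf}\le C[\om]_{A_p}$ for $p\ge 2$.
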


\begin{lm}[{\cite[Corollary 3.16]{Hytonen2015}}]\label{11181}

Let $\om\in A_p$ with $p\in (1,\nf)$, then there exists a constant $\de\sim 1/(\om)_{A_p}$
such that 
\begin{equation}\label{e07059}
[\om^{1+\de}]_{A_p}\le 4[\om]_{A_p}^{1+\de}.
\end{equation}

\end{lm}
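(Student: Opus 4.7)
The plan is to apply the reverse Hölder inequality (Lemma~\ref{11301}) simultaneously to $\om$ and to its dual weight $\si := \om^{1-p'}$, and then to multiply the two resulting estimates. Since $\om\in A_p$ forces $\si\in A_{p'}$, both weights lie in $B_\nf$, and by the very definition of $(\om)_{A_p}$ their $B_\nf$ constants are both bounded by $(\om)_{A_p}$. The key structural observation is that $\om^{(1-p')(1+\de)}=\si^{1+\de}$, so a single value of $\de$ applied to both RHIs will precisely reconstitute the $A_p$ characteristic of $\om^{1+\de}$ from the $A_p$ characteristic of $\om$ raised to the $(1+\de)$-th power.

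Concretely, I would choose
\begin{equation*}
\de := \f{1}{2^{n+1}(\om)_{A_p}-1} \sim \f{1}{(\om)_{A_p}},
\end{equation*}
which is admissible in Lemma~\ref{11301} both for $\om$ and for $\si$ because $(\om)_{A_p}$ dominates each of their $B_\nf$ constants. The reverse Hölder inequality then yields, for every cube $Q$,
\begin{equation*}
\fint_Q \om^{1+\de}\,dx \le 2\Bigl(\fint_Q \om\,dx\Bigr)^{1+\de}, \qquad
\fint_Q \si^{1+\de}\,dx \le 2\Bigl(\fint_Q \si\,dx\Bigr)^{1+\de}.
\end{equation*}
Taking the product of the first estimate with the $(p-1)$-th power of the second and using $\om^{(1-p')(1+\de)}=\si^{1+\de}$,
\begin{align*}
[\om^{1+\de}]_{A_p}
&=\sup_Q\Bigl(\fint_Q \om^{1+\de}\Bigr)\Bigl(\fint_Q \si^{1+\de}\Bigr)^{p-1}\\
&\le 2^p\sup_Q\Bigl[\Bigl(\fint_Q \om\Bigr)\Bigl(\fint_Q \si\Bigr)^{p-1}\Bigr]^{1+\de}
= 2^p\,[\om]_{A_p}^{1+\de}.
\end{align*}

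The one point requiring care is the stated multiplicative constant: the argument above produces $2^p$, not the claimed universal $4$. For the application to Theorem~\ref{07052} this is a non-issue, since the lemma is invoked only at $p=2$, where $2^p=4$ exactly. For a $p$-independent constant of $4$ in general, the obstacle is the factor $2$ appearing in Lemma~\ref{11301}; a refinement of that lemma of the form $\fint_Q \om^{1+\ep}\le(1+C\ep)(\fint_Q \om)^{1+\ep}$ for sufficiently small $\ep$ (which is available by sharpening the proof of the RHI) would, after an additional shrinking of $\de$ by a harmless factor still comparable to $1/(\om)_{A_p}$, make the product of the two RHI losses at most $4$ uniformly in $p$.
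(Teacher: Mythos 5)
Your proof is correct and realizes the one-line indication the paper gives: the statement is cited from \cite{Hytonen2015} without argument, with only the remark that it ``follows from the reverse H\"older inequality,'' and the simultaneous application of Lemma~\ref{11301} to $\om$ and to $\si=\om^{1-p'}$ with a common $\de\sim 1/(\om)_{A_p}$ (admissible for both since $(\om)_{A_p}=\max([\om]_{B_\nf},[\si]_{B_\nf})$) is exactly the natural way to carry that out. Your observation about the constant is also right: the computation yields $2^{p}$ rather than a universal $4$, and the two agree only at $p=2$, while the paper actually also invokes the lemma at $p=4$ through Corollary~\ref{11183}, where $2^{p}=16$. This discrepancy is immaterial for the paper's purposes, since only the exponent $1+\de$ and the size $\de\sim 1/(\om)_{A_p}$ matter in the deduction of \eqref{e11191} and of Theorem~\ref{07052}; any absolute multiplicative constant is absorbed into the $C$ of \eqref{e07056}. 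So the ``$4$'' in the lemma statement should be read as an unspecified absolute constant (or as specific to $p=2$), not as a claim requiring the refinement you suggest.
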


This result follows from the reverse H\"older inequality. We 
 refer abusively \eqref{e07059} as a reverse H\"older inequality as well. 
A main reason for doing this as we will see below is that 
this is a good substitute for  reverse H\"older inequality of 
multiple weights,
while the generalization of \eqref{e070510} to the multiple weights
is unclear.

A multiple weight  is defined as follows.
Let
$1\le p_1,\dots, p_m<\nf$, $\tf 1p=\sum_{j=1}^m\tf 1{p_j}$,
$\vec P=(p_1,\dots, p_m)$, and $\vec\om=(\om_1,\dots,\om_m)$. Set $\nu=\nu_{\vec\om}
=\prod_{j=1}^m\om_j^{p/p_j}$. We say $\vec\om$ satisfies $A_{\vec P}$ condition
if
\begin{equation}\label{e07058}
[\vec \om]_{A_{\vec P}}^{1/p}=\sup_Q\big(\f1{|Q|}\int_Q\nu\big)^{1/p}\prod_{j=1}^m\big(\f1{|Q|}\int_Q \om_j^{1-p_j'}\big)^{1/p_j'}<\nf.
\end{equation}
This coincides with the classical weights when $m=1$
and the supremum is $[\om]_{A_p}^{1/p}$.
The special case we are interested in is $m=2$, $p_1=p_2=2$,
where the supremum reads
$$
[(\om_1,\om_2)]_{A_{(2,2)}}=\sup_Q\big(\f1{|Q|}\int_Q\om_1^{1/2}\om_2^{1/2}\big)\prod_{j=1}^2\big(\f1{|Q|}\int_Q \om_j^{-1}\big)^{1/2}.
$$
In particular if $\om_1=\om_2$, we have
$[(\om_1,\om_2)]_{A_{(2,2)}}=[\om_1]_{A_2}$.

There is an interesting characterization of multiple weights.

\begin{lm}[{\cite[Theorem 3.6]{Lerner2009}}]\label{11182}
$\vec\om\in A_{\vec P}$ if and only if $\om_j^{1-p_j'}\in A_{mp'_j}$
and $\nu\in A_{mp}$.

\end{lm}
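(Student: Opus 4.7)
The plan is to prove both implications by applying Hölder's inequality to appropriate factorizations of the quantities involved, making crucial use of the defining identity $1/p=\sum_{j=1}^m 1/p_j$.

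For the \emph{necessity} direction, suppose $\vec\om\in A_{\vec P}$. To derive $\nu\in A_{mp}$ I would write $\nu^{-1/(mp-1)}=\prod_{j=1}^m \om_j^{-p/(p_j(mp-1))}$ and apply Hölder's inequality to $\fint_Q\nu^{-1/(mp-1)}$ with conjugate exponents $q_j=p_j(mp-1)/[p(p_j-1)]$. The compatibility condition $\sum_j 1/q_j=1$ reduces to $\sum_j (p_j-1)/p_j=(mp-1)/p$, which is exactly $\sum 1/p_j=1/p$. The resulting exponent on each $\om_j$ is $-pq_j/(p_j(mp-1))=1-p_j'$, so
\begin{equation*}
\bigl(\fint_Q\nu^{-1/(mp-1)}\bigr)^{mp-1}\le \prod_j \bigl(\fint_Q\om_j^{1-p_j'}\bigr)^{p/p_j'}.
\end{equation*}
Multiplying by $\fint_Q\nu$ identifies the right-hand side as the $A_{\vec P}$ quantity, so $[\nu]_{A_{mp}}\le[\vec\om]_{A_{\vec P}}$. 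The conditions $\om_j^{1-p_j'}\in A_{mp_j'}$ follow from an analogous Hölder manipulation, now regrouping $\nu$ together with the factors $\om_k^{1-p_k'}$ for $k\neq j$ so as to leave the two moments of $\om_j$ appearing in the $A_{mp_j'}$ constant exposed; the same identity $\sum 1/p_j=1/p$ again secures that the chosen Hölder exponents sum to one.

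For the \emph{sufficiency} direction, assume $\nu\in A_{mp}$ and $\om_j^{1-p_j'}\in A_{mp_j'}$ for every $j$. The aim is to bound $(\fint_Q\nu)^{1/p}\prod_j\bigl(\fint_Q\om_j^{1-p_j'}\bigr)^{1/p_j'}$ uniformly in $Q$. I would split this expression into a product of factors and combine the $A_{mp}$ estimate $\fint_Q\nu\le [\nu]_{A_{mp}}\bigl(\fint_Q\nu^{-1/(mp-1)}\bigr)^{-(mp-1)}$ with the $A_{mp_j'}$ bounds $\fint_Q\om_j^{1-p_j'}\le [\om_j^{1-p_j'}]_{A_{mp_j'}}\bigl(\fint_Q\om_j^{1/(mp_j-p_j+1)}\bigr)^{-(mp_j'-1)}$, arranging matters so that the negative higher-moment factors $\bigl(\fint_Q\nu^{-1/(mp-1)}\bigr)^{-(mp-1)}$ and $\bigl(\fint_Q\om_j^{1/(mp_j-p_j+1)}\bigr)^{-(mp_j'-1)}$ cancel in a second multilinear Hölder step, once more by virtue of $\sum 1/p_j=1/p$. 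This sufficiency direction is the main obstacle: the exponent bookkeeping for the cancellation is subtler than in the necessity, and if the balance fails to close exactly through Hölder alone, one can fall back on the reverse Hölder inequality (Lemma~\ref{11301}) to shift each exponent by a controllable amount and absorb the mismatch before recombining.
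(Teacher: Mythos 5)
This lemma is imported by citation from Lerner--Ombrosi--P\'erez--Torres--Trujillo-Gonz\'alez (Theorem 3.6); the paper does not reprove it, so there is no in-paper proof to compare against, only the standard argument in that reference, which is indeed the Hölder bookkeeping you propose.

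Your necessity argument is correct and complete. For $\nu\in A_{mp}$ you factor $\nu^{-1/(mp-1)}$ and the exponents $q_j=p_j(mp-1)/(p(p_j-1))$ are conjugate precisely because $\sum 1/p_j=1/p$; for $\sigma_j:=\om_j^{1-p_j'}\in A_{mp_j'}$ the regrouping $\om_j=\nu^{p_j/p}\prod_{k\ne j}\om_k^{-p_j/p_k}$ gives, after Hölder, $[\sigma_j]_{A_{mp_j'}}\le[\vec\om]_{A_{\vec P}}^{p_j'/p}$. Your sufficiency plan is also sound, but two remarks are worth making. First, the cancellation closes exactly and no reverse Hölder (Lemma~\ref{11301}) is needed: after applying the $A_{mp}$ inequality to $\nu$ and the $A_{mp_j'}$ inequalities to each $\sigma_j$, what must be bounded below is $\bigl(\fint_Q\nu^{-1/(mp-1)}\bigr)^{mp-1}\prod_j\bigl(\fint_Q\om_j^{\alpha_j}\bigr)^{(mp_j'-1)p/p_j'}$ with $\alpha_j=1/(mp_j-p_j+1)$, and the pointwise product of the integrands raised to the exponents $a_0=mp-1$, $a_j=(mp_j'-1)p/p_j'$ is identically $\nu^{-1}\prod_j\om_j^{p/p_j}=1$, since $\alpha_j a_j=p/p_j$; one final application of Hölder to $1=\fint_Q\prod_i f_i^{a_i/A}$ (with $A=\sum a_i$) then gives the lower bound $\ge 1$. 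Second, the hedge you add at the end (``fall back on the reverse Hölder inequality to shift each exponent'') is therefore not merely unnecessary but misleading: RHI controls higher moments, not the cross-moment cancellation required here, and invoking it would obscure the fact that the identity $\sum 1/p_j=1/p$ already makes the books balance exactly.
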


In particular, we have the following corollary.
\begin{cor}\label{11183}
$\vec\om\in A_{(2,2)}$ if and only if
$\om_j^{-1}\in A_{4}$ for $j=1,2$ and $\nu=\om_1^{1/2}\om_2^{1/2}\in A_2$.

\end{cor}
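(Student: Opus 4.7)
The plan is to deduce the corollary as an immediate specialization of Lemma~\ref{11182} by substituting the parameters $m=2$, $p_1=p_2=2$, and therefore $p=1$.

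First I would compute each ingredient appearing in the general criterion. Since $p_j'=2$ for $j=1,2$, the exponent $1-p_j'$ equals $-1$, so the condition $\om_j^{1-p_j'}\in A_{mp_j'}$ becomes $\om_j^{-1}\in A_4$. Next, since $\tf 1p=\tf 1{p_1}+\tf 1{p_2}=1$ gives $p=1$, the condition $\nu\in A_{mp}$ becomes $\nu\in A_2$. Finally, the joint weight is $\nu=\prod_{j=1}^2\om_j^{p/p_j}=\om_1^{1/2}\om_2^{1/2}$, matching the statement.

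Putting these three computations together, Lemma~\ref{11182} applied with $\vec P=(2,2)$ reads exactly: $\vec\om\in A_{(2,2)}$ if and only if $\om_j^{-1}\in A_4$ for $j=1,2$ and $\om_1^{1/2}\om_2^{1/2}\in A_2$. There is no genuine obstacle here; the only thing to watch is the bookkeeping of dual exponents and the definition of $\nu_{\vec\om}$, both of which were recorded in \eqref{e07058}.
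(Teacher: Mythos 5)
Your proof is correct and matches the paper's intent exactly: the paper states Corollary~\ref{11183} as an immediate specialization of Lemma~\ref{11182} with $m=2$, $p_1=p_2=2$ (so $p_j'=2$, $mp_j'=4$, $p=1$, $mp=2$), and you have simply spelled out that bookkeeping.
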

By Lemma \ref{11181} we know that 
for 
$$\de\les \min([\om_1^{-1}]^{-1}_{A_4},
[\om_2^{-1}]^{-1}_{A_4},[\nu]^{-1}_{A_2})\sim 
1/\max([\om_1^{-1}]_{A_4}, [\om_2^{-1}]_{A_4},[\nu]_{A_2}),$$
we have
$\om_1^{-(1+\de)}, \ \om_2^{-(1+\de)}\in A_4$
and $\nu^{1+\de}\in A_2$.
Consequently, by Corollary \ref{11183},
$\vec\om^{1+\de}=(\om_1^{1+\de}, \om_2^{1+\de})
\in A_{(2,2)}$.
This indicates the possible validity of the reverse H\"older inequality of
multiple weights
of the following form,
\begin{equation}\label{e12061}
\Big[\fint_Q\nu^{1+r}(\fint_Q\om_1^{-(1+r)})^{\tf12}(\fint_Q\om_2^{-(1+r)})^{\tf12}
\Big]^{\tf1{1+r}}\le C
\fint_Q\nu(\fint_Q\om_1^{-1})^{\tf12}(\fint_Q\om_2^{-1})^{\tf12}.
\end{equation}
In particular we have
$(\om_1^{1+r},\om_2^{1+r})\in A_{(2,2)}$
and
\begin{equation}\label{e11191}
[\om_1^{1+r},\om_2^{1+r}]_{A_{(2,2)}}\le [\om_1,\om_2]^{1+r}_{A_{(2,2)}}.
\end{equation}

\medskip

\begin{rmk}\label{12051}
By the proof of \cite[Theorem 3.6]{Lerner2009}
we see
\begin{equation}\label{e01211}
[\om_1^{-1}]_{A_4},[\om_2^{-1}]_{A_4}, [\nu]_{A_2}
\le [\om_1,\om_2]_{A_{(2,2)}}^2,
\end{equation} 
which implies that
$$
[\om_1,\om_2]_{A_{(2,2)}}^{-2}
\le \min
([\om_1^{-1}]^{-1}_{A_4}, [\om_2^{-1}]^{-1}_{A_4},[\nu]^{-1}_{A_2}).
$$
Moreover
by Lemma \ref{11301} we see that
 $[\vec\om^{1+r}]_{A_{(2,2)}}\le C [\vec\om]^{(1+r)}_{A_{(2,2)}}$ holds at least for $r\sim [\om_1,\om_2]_{A_{(2,2)}}^{-2}$.

\end{rmk}
We are concerning the largest possible number
$r$ such that $\vec\om^{1+r}\in A_{(2,2)}$.
The example $\om_1=\om_2\in A_2$ suggests that $r=[\om_1,\om_2]^{-1}_{A_{(2,2)}}$
might be a reasonable conjecture, which unfortunately turns out to be wrong.

\begin{rmk}
We observe that 
\eqref{e01211}
is sharp in the sense that
the smallest $t$ such that $[\om_1^{-1}]_{A_4},[\om_2^{-1}]_{A_4}, [\nu]_{A_2}
\le [\om_1,\om_2]_{A_{(2,2)}}^t$ is $2$,
although the number $t$ could be $1$ in the special case $\om_1=\om_2\in A_2$. 
So it is impossible to obtain a larger $r$ by improving
the exponent $t$.

To illustrate the claimed sharpness, we consider the special case 
$\om_1^{-1}=|x|^a\in A_4, \om_2=1$ and $\nu=|x|^{-\tf a2}\in A_2$.
A simple calculation (\cite[p. 506]{Grafakos2014}) shows that 
$a\in (-n,2n)$, $[\om]_{A_4}\sim \tf1{(a+n)(3n-a)^3}$, and $[\om_1,1]_{A_{(2,2)}}
=[\om]_{A_3}^{1/2}\sim \tf1{(a+n)^{1/2}(2n-a)}$.
$[\om]_{A_4}\le C[\om_1,1]^t_{A_{(2,2)}}$ is valid
only if $t\ge 2$ by letting $a\to -n$.

\end{rmk}


By the example in last remark, we are able to show that $r\sim [\om_1,\om_2]_{A_{(2,2)}}^{-2}$ we obtained in Remark~\ref{12051} is sharp.

\begin{lm}\label{08251}
If $r$ is a positive number such that \eqref{e11191} holds 
for all $(\om_1,\om_2)\in A_{(2,2)}$,
then we necessarily have 
$r\lesssim [\om_1,\om_2]_{A_{(2,2)}}^{-2}$.
In particular, \eqref{e11191} holds if and only if $r\lesssim [\om_1,\om_2]_{A_{(2,2)}}^{-2}$.

\end{lm}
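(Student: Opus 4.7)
The approach is to exploit the power weight family already introduced in the preceding Remark, which is custom-built to saturate the sharpness claim. The key realization is that for the necessity direction one does not even need to estimate the $A_{(2,2)}$ constant of $\vec\om^{1+r}$ quantitatively: merely the condition that $\vec\om^{1+r}$ lie in $A_{(2,2)}$ (i.e.\ that the left-hand side of \eqref{e11191} be finite) already forces $r\ls K^{-2}$, where $K:=[\om_1,\om_2]_{A_{(2,2)}}$.

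Concretely, take $\om_1=|x|^{-a}$ and $\om_2\equiv 1$ with $a\in(-n,2n)$, so that by the computation recalled in the previous Remark,
\[
K\sim \f{1}{(a+n)^{1/2}(2n-a)}.
\]
Rather than sending $a\to 2n^-$ (which will only produce the weaker bound $r\ls K^{-1}$), I would let $a\to -n^+$ and write $\de:=a+n\to 0^+$, so that $K\sim \de^{-1/2}$ and therefore $K^{-2}\sim \de$.

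Now I apply Corollary~\ref{11183} to $\vec\om^{1+r}=(|x|^{-a(1+r)},1)$: its membership in $A_{(2,2)}$ demands $|x|^{a(1+r)}\in A_4$ and $|x|^{-a(1+r)/2}\in A_2$, whose joint range is $a(1+r)\in(-n,2n)$. Since $a<0$ is close to $-n$, the binding constraint is $a(1+r)>-n$, i.e.\ $(1+r)|a|<n$. Writing $|a|=n-\de$, this rearranges to
\[
r<\f{\de}{n-\de}\sim \de\sim K^{-2}.
\]
Assuming \eqref{e11191} forces the left-hand side to be finite, so this constraint is inherited, and the necessity $r\ls [\om_1,\om_2]_{A_{(2,2)}}^{-2}$ follows. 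The ``in particular'' half (the characterization) then combines this necessity with the sufficiency already established in Remark~\ref{12051}.

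The main (and mild) obstacle is a conceptual one rather than a computational one: one must recognize that the obstruction to \eqref{e11191} comes from the sharp $A_4$-integrability failure of $\om_1^{-(1+r)}$ as $\om_1^{-1}$ approaches its lower critical exponent, not from the blow-up of $K$ at the other endpoint $a\to 2n^-$. Aside from this choice of endpoint, the argument reduces to the elementary Muckenhoupt range computation for power weights.
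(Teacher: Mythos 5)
Your proof is correct and follows essentially the same route as the paper: the same power-weight family $\om_1^{-1}=|x|^a$, $\om_2\equiv1$, the same extraction of the range condition $(1+r)a\in(-n,2n)$ for $\vec\om^{1+r}\in A_{(2,2)}$ (the paper reads this off via $\om^{1+r}\in A_3$, you via Corollary~\ref{11183}, but the constraint is identical), and the same limit $a\to -n^+$ producing $r\ls K^{-2}$.
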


\bpf
Take $\om_1^{-1}=\om=|x|^a\in A_4$, $\om_2=1$, and $\nu=\om^{-1/2}\in A_2$, or equivalently
$a\in(-n,2n)$.
We know that in this case $[\om_1,\om_2]_{A_{(2,2)}}=[\om]_{A_3}^{1/2}$, and 
\eqref{e11191} becomes
$$
[\om^{1+r}]_{A_3}\le C[\om]^{1+r}_{A_3}.
$$
A weaker version is that $\om^{1+r}\in A_3$, which is equivalent to that $(1+r)a\in(-n,2n)$. Consider the case
$a$ is close to $-n$, then
$r\le \tf{a+n}{-a}\sim [\om]_{A_3}^{-1}=[\om_1,\om_2]^{-2}_{A_{(2,2)}}$ since $[\om]_{A_3}\sim \tf1{(n+a)(2n-a)^2}.$

The last statement follows from the first one and Remark~\ref{12051}.

\epf

\begin{rmk}
There is a different property which is also called RHI for multiple weights. We refer  interested
readers to \cite{Cruz-Uribe2017}.
\end{rmk}

\section{A quantitative weighted inequality}

In this section, we prove  Theorem~\ref{07052},
which relies on an improved Dini estimate.

Let $K_0=\tf{\Om((y,z)')}{|(y,z)|^{2n}}\chi_{1\le|(y,z)|\le 2}$ be the truncated kernel of the bilinear
rough singular integral. 
Take $\vp\in\mathcal S$ such that $supp\ \wh\vp\sset B(0,1)$
and $\wh\vp(y,z)=1$ when $|(y,z)|\le 1/2$,
and define $\wh\psi=\wh\vp(\cdot)-\wh\vp(2\cdot)$
Define the kernel $K_k=2^{-2nk}K_0(2^{-k}(y,z))$, $\vp_k=2^{-2kn}\vp(2^{-k}(y,z))$, and $\psi_k=2^{-2kn}\psi(2^{-k}(y,z))$.
Define
$$
T_0(f,g)(x)=\sum_k K_k*(f\otimes g)*\vp_k(x,x),
$$
and 
$$
T_j(f,g)(x)=\sum_k K_k*(f\otimes g)*\psi_{k-j}(x,x)
$$
for $j\ge 1$.
We remark that this decomposition is essentially the same as 
the one used in \cite{Grafakos2014a},
where $K_0$ is a smooth truncation. Both truncations
satisfy the same decay condition (in frequency side),
so the argument in \cite{Grafakos2014a}
could be applied here as well.

We have 
the following lemma 
on $T_j$.

\begin{lm}[{\cite[Proposition 5]{Grafakos2015}}]\label{11191}
$T_\Om=\sum_{j\in \mathbb Z}T_j$. $T_j$ is a bilinear Calder\'on-Zygmund operator
such that
 $\|T_j\|_{L^2\times L^2\to L^1}\le C 2^{-|j|\de}$
 for $\de=1/16$.
 
Moreover,
 for any $\ep>0$, there exists a constant $C_\ep\le \tf C{\ep}$
such that $T_j$ has the Calder\'on-Zygmund constant
$C_\ep2^{|j|\ep}$ for all $j\in\mathbb Z$.

\end{lm}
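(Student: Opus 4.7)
The plan is to proceed in three stages: (i) verify the decomposition identity $T_\Om=\sum_j T_j$, (ii) establish the exponential decay of the operator norms, and (iii) extract Calder\'on--Zygmund kernel estimates for each $T_j$ with a controlled loss.

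For (i), a telescoping computation gives $\wh{\vp_k}(\xi,\eta)+\sum_{j\ge 1}\wh{\psi_{k-j}}(\xi,\eta)=1$ for every $(\xi,\eta)$, since by construction $\wh\psi(\xi,\eta)=\wh\vp(\xi,\eta)-\wh\vp(2\xi,2\eta)$. Applying this partition of unity to $\wh{K_k}$ at each scale $k$ and summing over $k\in\bbz$ reconstructs the full rough kernel $\Om((y,z)')/|(y,z)|^{2n}$; collecting pieces by the index $j$ of $\psi_{k-j}$ (together with the low-frequency term comprising $T_0$) yields exactly the claimed operators.

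For (ii), the key input is a Duoandikoetxea--Rubio de Francia style estimate $|\wh{K_0}(\xi,\eta)|\les \min(|(\xi,\eta)|,|(\xi,\eta)|^{-\beta})$ for some $\beta>0$ depending only on $\|\Om\|_{L^\nf}$, where the bound near zero uses the cancellation $\int_{\mathbb S^{2n-1}}\Om=0$ and the bound at infinity follows from the method of rotations combined with van der Corput. Since the Fourier symbol of the $k$-th summand of $T_j$ is $\wh{K_0}(2^k\xi,2^k\eta)\,\wh\psi(2^{k-j}(\xi,\eta))$, supported where $|(\xi,\eta)|\sim 2^{j-k}$, it is pointwise controlled by $2^{-j\beta}$ for large positive $j$ and by $2^j$ for large negative $j$. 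A bilinear Plancherel argument at each fixed scale $k$, combined with Littlewood--Paley almost-orthogonality across $k$, converts these symbol bounds into a uniform $L^2\times L^2\to L^1$ estimate of order $\min(2^{-j\beta},2^j)$; interpolating this against a crude scale-by-scale bound of order one produces the symmetric decay $2^{-|j|\delta}$, with the explicit value $\delta=1/16$ arising as the optimum of this interpolation balance.

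For (iii), the kernel of $T_j$ is the diagonal pullback of $\sum_k(K_k\ast\psi_{k-j})$. Each summand is supported on a region of size $\sim 2^k$ and has $L^\nf$ norm $\les 2^{-2nk}$; summation in $k$ recovers the standard size $|(x-y,x-z)|^{-2n}$. For H\"older smoothness, each derivative of $\psi_{k-j}$ costs $2^{-(k-j)}$ rather than $2^{-k}$, inflating the naive smoothness estimate by a factor of $2^j$; trading a small H\"older exponent $\ep$ against the unit-order smoothness absorbs this loss as $2^{|j|\ep}$ with constant $C_\ep\le C/\ep$. The principal technical obstacle is pinning down the sharp exponent $\delta=1/16$ in step (ii): while some decay $2^{-j\beta}$ follows quickly from the Fourier analysis of $K_0$, extracting the specific numerical rate requires a delicate bilinear complex interpolation between several mixed-norm endpoints, and this is the technical heart of the Grafakos--He--Honz\'ik analysis on which the lemma rests.
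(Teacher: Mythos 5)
The paper does not actually prove this lemma; it imports the operator-norm decay $\|T_j\|_{L^2\times L^2\to L^1}\le C2^{-|j|\de}$ wholesale from \cite[Proposition 5]{Grafakos2015}, and the remark that follows only explains that re-examining \cite[Lemma 11]{Grafakos2015} with the help of \cite[Appendix B1]{Grafakos2014a} yields the quantitative constant $C_\ep\le C/\ep$. Your steps (i) and (iii) are consistent with the strategy behind those citations: the telescoping identity in (i) is standard, and the trade in (iii) of a small H\"older exponent $\ep$ against the $O(2^{|j|})$ inflation from differentiating $\psi_{k-j}$, yielding the factor $2^{|j|\ep}$ with constant $C_\ep\lesssim\ep^{-1}$, is exactly what the paper's remark refers to (one minor inaccuracy: $K_k*\psi_{k-j}$ is not compactly supported, since $\psi$ is only Schwartz, but the rapid decay makes your heuristic harmless).

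The genuine gap is in step (ii). You assert that a ``bilinear Plancherel argument at each fixed scale $k$, combined with Littlewood--Paley almost-orthogonality across $k$,'' converts the pointwise symbol bounds into the $L^2\times L^2\to L^1$ estimate. There is no such Plancherel mechanism for this endpoint: $L^1$ is not an inner-product space, and a bilinear multiplier whose symbol is merely bounded and supported on a dyadic annulus does not, absent further structure, map $L^2\times L^2$ into $L^1$ with norm comparable to the symbol's sup norm. The Coifman--Meyer and H\"ormander--Mikhlin routes are likewise closed off here because $\wh{K_0}$ has no useful derivative bounds when $\Om$ is only $L^\nf$. Obtaining even a uniform-in-$j$ estimate, let alone the exponential decay $2^{-|j|\de}$, is precisely the content and the main difficulty of \cite[Proposition 5]{Grafakos2015}; their argument replaces the nonexistent Plancherel step with a Daubechies product-wavelet expansion of the symbol, row/column bounds for the resulting infinite matrix of coefficients, and an interpolation against the trivial estimate, and the value $\de=1/16$ falls out of that bookkeeping rather than from an ``optimum of the interpolation balance'' against a crude scale-by-scale bound. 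You correctly flag this region as the technical heart of the lemma, but you mislocate the difficulty as pinning down the numerical exponent: the hard part is the decay itself, and the mechanism you propose in its place would not deliver it.
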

\begin{rmk}
The boundedness of $T_j$ is exactly \cite[Proposition 5]{Grafakos2015}.

\cite[Lemma 11]{Grafakos2015} gives just the existence of $C_\ep$ without the
form here. To obtain the right bound we need, we have to re-examine the proof 
to show that $C_\ep\le C(\tf{1}{2n+\ep-2n}+1)\le \tf C\ep$
with the
help of \cite[Appendix B1]{Grafakos2014a}.

\end{rmk}

The Dini condition plays a crucial role in the following argument, which we
now define; see, for example, \cite{Li2016} and references therein.
A bilinear operator is called an $\om$-Calder\'on-Zygmund
operator if its kernel satisfies the size condition
$|K(x,y,z)|\le \tf {C_K}{(|x-y|+|x-z|)^{2n}}$,
and the smoothness condition
\begin{align*}
&|K(x+h,y,z)-K(x,y,z)|+|K(x,y+h,z)-K(x,y,z)|\\
+&|K(x,y,z+h)-K(x,y,z)|
\le \f1{(|x-y|+|x-z|)^{2n}}\om(\f{|h|}{|x-y|+|x-z|}),
\end{align*}
whenever $|h|\le \tf12\max(|x-y|,|x-z|)$.
We concern mainly the case when
$\om$ is increasing satisfying $\om(0)=0$,
and $\|\om\|_{Dini}=\int_0^1\om(t)\tf{dt}t<\nf$.
In this case we say the kernel (or equivalently the operator)
satisfies the Dini condition.

For $T_j$ in the previous lemma, we see that the C-Z constant
is $C_\ep2^{|j|\ep}$, which implies that we may take
$\om(t)=C_\ep 2^{|j|\ep}t^\ep$,  hence
$\|\om\|_{Dini}\le C_\ep 2^{|j|\ep}$.

\medskip

This estimate based on the classical decomposition is not good enough, and we 
need a new decomposition introduced by 
\cite{Hytonen2012a}.


Let $N(\ell)=2^\ell$, and we should define $\tilde T_\ell$
as follows.
$$
\tilde T_0(f,g)=T_0(f,g)(x)
$$
and 
$$
\tilde T_\ell(f,g)(x)=\sum_k K_k*(f\otimes g)*[\vp_{k-N(\ell)}-
\vp_{k-N(\ell-1)}](x,x)
$$
for $\ell\ge 1$.

We look at these operators one step further. We need their 
equivalent multiplier
definitions,
which are
$$
\tilde T_\ell(f,g)(x)
=\sum_{k\in\bbz}\int_{\bbr^{2n}}
m_{k,\ell}(\xi,\eta)
\wh f(\xi)\wh g(\eta)e^{2\pi ix\cdot(\xi+\eta)}d\xi d\eta,
$$
where $m_{k,\ell}(\xi,\eta)
=\wh K(2^k\xi,2^k\eta)
[\wh \vp(2^{k-N(\ell)}(\xi,\eta))-\wh \vp(2^{k-N(\ell-1)}(\xi,\eta))]$
is supported in the annulus
$$
\{(\xi,\eta)\in\bbr^{2n}:\ 2^{2^{\ell-1}}\le|(\xi,\eta)|\le 2^{2^\ell}\}.
$$

Obviously $\tilde T_\ell=\sum_{j=N(\ell-1)}^{N(\ell)}T_j$,
so we obtain the following two trivial estimates depending on
Lemma~\ref{11191}.
\begin{equation}\label{e07053}
\|\tilde T_\ell(f,g)\|_{L^1}
\le C2^{-N(\ell)\de'}\|f\|_{L^2}
\|g\|_{L^2}
\end{equation}
since 
$\sum_{j=N(\ell-1)}^{N(\ell)} C 2^{-|j|\de}
\le C2^{\ell-1}2^{-2^{\ell-1}\de}
\le C_\de 2^{-N(\ell)\de/4}$.
The second estimate is that the Calder\'on-Zygmund constant
$C_\ell$
related to
$\tilde T_\ell$ is
bounded by
$$
\sum_{j=N(\ell-1)}^{N(\ell)}  C_\ep 2^{j\ep}
\le C_\ep N(\ell)2^{N(\ell)\ep}.
$$
By taking $\ep=t\ell N(\ell)^{-1}$,
we control the last quantity by
$C_t2^{(2+t)\ell}$,
which in anyway is greater than $2^{2\ell}=N(\ell)^2$,
a bound we shall improve.



It was essentially proved in \cite{Hytonen2012a} the following lemma.

\begin{lm}[{\cite[Lemma 3.10]{Hytonen2012a}}]\label{07051}
The operator $\tilde T_\ell$ is a bilinear $\om_\ell$-Calder\'on-Zygmund
operator with
\begin{equation}\label{e07051}
C_\ell\le C\|\Om\|_{L^\nf}
\end{equation}
and 
\begin{equation}\label{e07052}
\om_\ell(t)\le C\|\Om\|_{L^\nf}\min(1, 2^{N(\ell)}t),
\end{equation}
which implies further that
$\|\om_\ell\|_{Dini}\le C\|\Om\|_{L^\nf}(1+N(\ell))$.

\end{lm}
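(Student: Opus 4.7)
The plan is to work in physical space with the kernel of $\tilde T_\ell$. By translation invariance the bilinear Calder\'on--Zygmund kernel is $K(x,u,v)=K_\ell^\sharp(x-u,x-v)$ with
\begin{equation*}
K_\ell^\sharp(y,z)=\sum_{k\in\bbz}(K_k\ast\Phi_\ell^{(k)})(y,z),\qquad \Phi_\ell^{(k)}:=\vp_{k-N(\ell)}-\vp_{k-N(\ell-1)}.
\end{equation*}
Two facts drive everything. First, $K_k$ is supported in the annulus $|(y,z)|\sim 2^k$ with $\|K_k\|_{L^\nf}\ls 2^{-2nk}\|\Om\|_{L^\nf}$. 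Second, $\Phi_\ell^{(k)}$ is a Schwartz bump localized at the much smaller scale $2^{k-N(\ell-1)}\ll 2^k$, satisfying $\|\Phi_\ell^{(k)}\|_{L^1}\ls 1$ and, crucially, $\|\nabla\Phi_\ell^{(k)}\|_{L^1}\ls 2^{N(\ell)-k}$. Morally, convolving with $\Phi_\ell^{(k)}$ behaves like multiplication by a bounded $L^1$ mass, whereas differentiating picks up the extra factor $2^{N(\ell)}$ from the steep mollifier: this is the only source of the loss in $\ell$.

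For the size bound \eqref{e07051}, I would use the Schwartz decay of $\Phi_\ell^{(k)}$ to show that $|(K_k\ast\Phi_\ell^{(k)})(y,z)|\ls 2^{-2nk}\|\Om\|_{L^\nf}$ on a slight thickening of the annulus $|(y,z)|\sim 2^k$, with rapid off-annulus decay. For fixed $(y,z)$ with $|(y,z)|\sim 2^{k_0}$ the sum over $k\in\bbz$ is dominated by the diagonal term $k=k_0$ and yields $|K_\ell^\sharp(y,z)|\ls\|\Om\|_{L^\nf}/|(y,z)|^{2n}$, which is \eqref{e07051}.

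For the modulus-of-continuity bound \eqref{e07052}, I would interpolate the size bound with the derivative estimate
\begin{equation*}
\|\nabla(K_k\ast\Phi_\ell^{(k)})\|_{L^\nf}\le\|K_k\|_{L^\nf}\|\nabla\Phi_\ell^{(k)}\|_{L^1}\ls 2^{-(2n+1)k}\cdot 2^{N(\ell)}\|\Om\|_{L^\nf}.
\end{equation*}
For $|h|\le\tf12|(y,z)|$ with $|(y,z)|\sim 2^{k_0}$ this gives
\begin{equation*}
|K_\ell^\sharp(y+h,z)-K_\ell^\sharp(y,z)|\ls 2^{-2nk_0}\|\Om\|_{L^\nf}\min(1,|h|2^{N(\ell)-k_0}),
\end{equation*}
which translates, upon setting $t=|h|/|(y,z)|$, to $\om_\ell(t)\ls\|\Om\|_{L^\nf}\min(1,2^{N(\ell)}t)$. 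Smoothness in $z$ is symmetric, and smoothness in $x$ follows by telescoping since shifting $x$ by $h$ shifts both arguments of $K_\ell^\sharp$ by $h$. Finally the Dini norm splits at $t=2^{-N(\ell)}$:
\begin{equation*}
\|\om_\ell\|_{Dini}\ls\|\Om\|_{L^\nf}\left[\int_0^{2^{-N(\ell)}}2^{N(\ell)}\,dt+\int_{2^{-N(\ell)}}^1\f{dt}{t}\right]\ls\|\Om\|_{L^\nf}(1+N(\ell)).
\end{equation*}

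The main obstacle I anticipate is the off-diagonal bookkeeping: for $k$ far from $k_0$, the Schwartz tail contributions of $(K_k\ast\Phi_\ell^{(k)})(y,z)$ must sum to a constant independent of $\ell$, and in particular the crucial $2^{N(\ell)}$ gain in the derivative estimate must survive summation over all scales $k$. Both points are handled by choosing the decay exponent of $\vp$ sufficiently large, but the bookkeeping has to be done carefully to ensure that no spurious factor of $N(\ell)$ slips into \eqref{e07051} or into the $\min(1,2^{N(\ell)}t)$ bound.
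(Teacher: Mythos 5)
Your proposal is correct and takes essentially the same approach as the paper's Appendix, which adapts Lemma~3.10 of Hyt\"onen--P\'erez--Rela to the bilinear setting: work in physical space, exploit the compact annular support of $K_k$ and the Schwartz decay of $\vp$, bound $\sum_k K_k*\vp_{k-N(\ell)}$ and its gradient pointwise, and sum over $k$ by splitting into near/far regimes. The one caveat is that the Young's-inequality bound $\|\nabla(K_k*\Phi_\ell^{(k)})\|_{L^\nf}\le\|K_k\|_{L^\nf}\|\nabla\Phi_\ell^{(k)}\|_{L^1}$ is uniform in space and does not by itself sum over $k$; as you correctly anticipate, one must use the pointwise off-diagonal decay, which the paper makes explicit with the case split $l\le k+10$ versus $l\ge k+10$ and the $(2n+1)$-fold decay of $\vp$.
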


We see that the function 
$\vp$ in \cite{Hytonen2012a} is compactly supported in the spatial side (for variable $x$), while 
our decomposition uses that $\vp$ is compactly supported in 
the frequency side (for variable $\xi$). We take $\vp$ of this form
due to the 
the method taken in \cite{Grafakos2015}.
We are still able to prove Lemma~\ref{07051}
in our setting, which is given in the Appendix.

\begin{rmk}
The Dini constant of $\tilde T_\ell$ we had was $N(\ell)^2$,
which is now $N(\ell)$.

\end{rmk}

In the linear case Lacey \cite{Lacey2017} proved sparse controls for singular integrals whose kernels satisfy the Dini condition,
which was reproved later by Hyt\"onen, Roncal, and Tapiola \cite{Hytonen2015}, and Lerner \cite{Lerner2016}. Li
\cite{Li2016} generalized Lerner's result to the multilinear setting, which is useful for us.

\begin{lm}[
{\cite[Theorem 1.2]{Li2016}}]\label{11192}
Let $T$ be a bilinear 
$\om$-Calder\'on-Zygmund operator. Then the norm
$
\|T(f,g)\|_{L^1(\nu)}$
is bounded by 
$$
C[\om_1,\om_2]_{A_{(2,2)}}(\|T\|_{L^2\times L^2\to L^1}+C_K+\|\om\|_{Dini})\|f\|_{L^2(\om_1)}
\|g\|_{L^2(\om_2)}
$$
for $(\om_1,\om_2)\in A_{(2,2)}$ and $\nu=\om_1^{1/2}\om_2^{1/2}$.

\end{lm}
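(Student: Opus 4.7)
I plan to prove the lemma in two stages: first establish a bilinear sparse domination for $T$ whose constant equals $C_T := \|T\|_{L^2\times L^2\to L^1} + C_K + \|\om\|_{Dini}$, and then convert the resulting sparse form into the desired weighted inequality by a single Cauchy--Schwarz on each sparse cube together with the definition of $A_{(2,2)}$. The sparse domination is the analytic heart of the argument; the weighted step is algebraic.

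\textbf{Step 1: sparse domination.} Adapt the Lacey--Lerner grand-maximal-truncation scheme to the bilinear Dini setting. Introduce
\begin{equation*}
\mathcal M_T(f,g)(x) = \sup_{Q\ni x}\ \sup_{\xi,\xi'\in Q}\big|T(f\chi_{\bbr^n\setminus 3Q},g\chi_{\bbr^n\setminus 3Q})(\xi)-T(f\chi_{\bbr^n\setminus 3Q},g\chi_{\bbr^n\setminus 3Q})(\xi')\big|
\end{equation*}
and prove the weak endpoint $\mathcal M_T\colon L^1\times L^1\to L^{1/2,\infty}$ with constant $\le C\,C_T$. The $L^2\times L^2\to L^1$ strong bound gives a weak $(1,1,1/2)$ bound for $T$ itself via a Calder\'on--Zygmund decomposition at height $\la$; the size constant $C_K$ absorbs the sum of far-off-diagonal tails; and the Dini modulus enters through the telescoping bound $\sum_{k\ge 0}\om(2^{-k})\lesssim \|\om\|_{Dini}$ when comparing $T(f\chi_{\bbr^n\setminus 3Q},g\chi_{\bbr^n\setminus 3Q})$ at two points of $Q$. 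A now standard iterative stopping-time construction (at each stage, select maximal subcubes $Q'\subsetneq Q$ on which a suitable combination of $\mathcal M_T(f\chi_{3Q},g\chi_{3Q})$, $M(f\chi_{3Q})$, $M(g\chi_{3Q})$ and the local averages of $|T(f,g)|$ exceeds a large multiple of $\fint_{3Q}|f|\cdot\fint_{3Q}|g|$) then produces a sparse family $\mathcal S$ such that, for every bounded compactly supported $h$,
\begin{equation*}
|\langle T(f,g),h\rangle|\le C\,C_T\sum_{Q\in\mathcal S}|Q|\Big(\fint_Q|f|\Big)\Big(\fint_Q|g|\Big)\Big(\fint_Q|h|\Big).
\end{equation*}

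\textbf{Step 2: weighted bound.} By the duality $L^1(\nu)^*=L^\infty$, it suffices to bound the sparse form with $h=\phi\nu$, $\|\phi\|_\infty\le 1$. Cauchy--Schwarz on each cube yields $\fint_Q|f|\le(\fint_Q f^2\om_1)^{1/2}(\fint_Q\om_1^{-1})^{1/2}$, and similarly for $g$, while $\fint_Q|\phi\nu|\le\fint_Q\nu$. The defining inequality \eqref{e07058} then gives $(\fint_Q\nu)(\fint_Q\om_1^{-1})^{1/2}(\fint_Q\om_2^{-1})^{1/2}\le [\om_1,\om_2]_{A_{(2,2)}}$, extracting precisely one factor of the weight constant. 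What remains is
$\sum_{Q\in\mathcal S}|Q|(\fint_Q f^2\om_1)^{1/2}(\fint_Q g^2\om_2)^{1/2}\lesssim \|f\|_{L^2(\om_1)}\|g\|_{L^2(\om_2)}$, which follows uniformly in $\mathcal S$ from sparseness ($|Q|\le C|E_Q|$), Cauchy--Schwarz in the summation, and the dyadic Carleson embedding applied separately to the nonnegative functions $f^2\om_1$ and $g^2\om_2$ via principal cubes.

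\textbf{Main obstacle.} The technical heart is the weak-type $(1,1,1/2)$ endpoint for $\mathcal M_T$ with the specific additive constant $C_T$: the three ingredients -- the bilinear $L^2\times L^2\to L^1$ strong bound, the kernel size constant $C_K$, and the Dini modulus $\om$ -- must each be inserted at the right point of the iteration, and the bilinear stopping-time scheme, with three interacting functions $f,g,h$, requires delicate bookkeeping. Once this endpoint is in place, passage to the sparse form and the weighted estimate are essentially automatic.
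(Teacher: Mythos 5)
This lemma is quoted verbatim from Li \cite{Li2016} and the paper gives no proof of its own, so the natural comparison is with Li's argument (which in turn follows the Lacey--Lerner scheme). Your Step 1 --- pointwise sparse domination for a bilinear Dini Calder\'on--Zygmund operator via the grand maximal truncation operator $\mathcal M_T$, its weak $L^1\times L^1\to L^{1/2,\infty}$ endpoint, and an iterated stopping time --- is the right route, and you have correctly identified where the three constants $\|T\|_{L^2\times L^2\to L^1}$, $C_K$, and $\|\om\|_{Dini}$ enter.

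Step 2 contains a genuine gap. After extracting one factor of $[\vec\om]_{A_{(2,2)}}$ you claim
\begin{equation*}
\sum_{Q\in\mathcal S}|Q|\Big(\fint_Q f^2\om_1\Big)^{1/2}\Big(\fint_Q g^2\om_2\Big)^{1/2}\ \lesssim\ \|f\|_{L^2(\om_1)}\|g\|_{L^2(\om_2)}
\end{equation*}
uniformly over sparse families $\mathcal S$, citing sparseness, Cauchy--Schwarz in the sum, and Carleson embedding. This inequality is false. After Cauchy--Schwarz the left side equals $\big(\sum_Q\int_Q f^2\om_1\big)^{1/2}\big(\sum_Q\int_Q g^2\om_2\big)^{1/2}$, and for sparse families the quantity $\sum_{Q\in\mathcal S}\int_Q F$ is not controlled by $\int F$: take $\om_1=\om_2=1$, let $\mathcal S=\{[0,2^{-k}]^n:0\le k\le N\}$ (which is $(1-2^{-n})$-sparse), and let $f=g$ be an $L^2$-normalized bump concentrated near the origin; then each term in the sum is comparable to $1$, so the left side is of order $N$ while the right side is of order $1$. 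The Carleson embedding theorem does not rescue this: it controls $\sum_Q a_Q(\fint_Q h)^2$ for a Carleson sequence $\{a_Q\}$, whereas your sum involves $\fint_Q h^2$, a structurally different object. What the correct argument (Li's, and the standard sharp $A_2$ argument before it) does is the change of measure: write $\fint_Q|f|=(\fint_Q\om_1^{-1})\,\fint_{Q,\om_1^{-1}}(|f|\om_1)$ and similarly for $g$, so the trilinear sparse form becomes $\sum_Q\big(\fint_Q\nu\big)\big(\fint_Q\om_1^{-1}\big)\big(\fint_Q\om_2^{-1}\big)|Q|\,\fint_{Q,\om_1^{-1}}(|f|\om_1)\,\fint_{Q,\om_2^{-1}}(|g|\om_2)$; a \emph{single} factor $[\vec\om]_{A_{(2,2)}}$ is extracted from $\big(\fint_Q\nu\big)\big(\fint_Q\om_1^{-1}\big)^{1/2}\big(\fint_Q\om_2^{-1}\big)^{1/2}$, after which one passes from $Q$ to the pairwise-disjoint majorizing sets $E_Q$ and dominates $\fint_{Q,\om_j^{-1}}(\cdot)$ on $E_Q$ by the dyadic weighted maximal operator $M^{d}_{\om_j^{-1}}$, which is bounded on $L^2(\om_j^{-1})$ with a universal constant. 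Your version keeps Lebesgue measure $|Q|$ attached to cubes where the argument needs the reweighted measure on the disjoint sets $E_Q$; that substitution is exactly where the estimate fails.
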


Another important tool is the interpolation between measures. The version we need is taken from the classical monograph
\cite{Bergh2012}.

\begin{lm}[
{\cite[Theorem 4.4.1, Theorem 5.5.3]{Bergh2012}}]\label{11194}

Let $T$ be a bilinear operator such that 
$$
\|T(f,g)\|_{L^p(\mu_1)}
\le M_1\|f\|_{L^{p_1}(\om_1)}\|g\|_{L^{p_2}(\nu_1)}
$$
and
$$
\|T(f,g)\|_{L^p(\mu_2)}
\le M_2\|f\|_{L^{p_1}(\om_2)}\|g\|_{L^{p_2}(\nu_2)}.
$$
Then $T$ can be extended to a bilinear operator bounded
from $L^{p_1}(\om)\times L^{p_2}(\nu)$
to $L^p(\mu)$ with norm bounded by
$M_1^{1-\tht}M_2^{\tht}$,
where $\mu=\mu_1^{1-\tht}\mu_2^\tht$,
and both $\om$ and $\nu$ are defined in a similar way.

\end{lm}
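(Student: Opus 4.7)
The plan is to prove the lemma by bilinear complex interpolation blended with the Stein--Weiss change-of-measure device. By the density of simple functions in each weighted Lebesgue space, it suffices to show, for simple functions $f, g, h$ normalized so that $\|f\|_{L^{p_1}(\om)} = \|g\|_{L^{p_2}(\nu)} = 1$ and $\|h\|_{L^{p'}(\mu)} = 1$, the estimate
$$
\Big| \int T(f,g) h \, d\mu \Big| \le M_1^{1-\tht} M_2^{\tht};
$$
taking the supremum over such $h$ and invoking $L^p(\mu)$--$L^{p'}(\mu)$ duality then yields the claim. (The case $p = 1$ is handled by pairing instead with $L^{\nf}(\mu)$, a minor cosmetic change.)

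The core construction is three analytic families on the strip $S = \{z\in\bbc : 0 \le \mathrm{Re}(z) \le 1\}$:
$$
f_z = f \cdot (\om/\om_1)^{(1-z)/p_1} (\om/\om_2)^{z/p_1},
\q
g_z = g \cdot (\nu/\nu_1)^{(1-z)/p_2} (\nu/\nu_2)^{z/p_2},
$$
$$
h_z = h \cdot (\mu/\mu_1)^{(1-z)/p'} (\mu/\mu_2)^{z/p'}.
$$
Since each of the weights is strictly positive where it matters, the identity $|\om^{iy}| = 1$ (and its analogues for $\nu, \mu$) yields: at $z = \tht$, $(f_\tht, g_\tht, h_\tht) = (f, g, h)$; on $\mathrm{Re}(z) = 0$, one computes $\|f_z\|_{L^{p_1}(\om_1)} = \|f\|_{L^{p_1}(\om)} = 1$ and likewise $\|g_z\|_{L^{p_2}(\nu_1)} = \|h_z\|_{L^{p'}(\mu_1)} = 1$; on $\mathrm{Re}(z) = 1$ the same holds with subscript $2$ throughout.

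Define
$$
F(z) = \int T(f_z, g_z) h_z \mu_z \, dx, \qq \mu_z = \mu_1^{1-z}\mu_2^z.
$$
For simple $f, g, h$ this is a finite sum of terms of the form $a_k e^{b_k z}$, so $F$ is entire and bounded on $\overline S$. H\"older's inequality, together with $|\mu_z| = \mu_1$ on $\mathrm{Re}(z) = 0$ and the first endpoint hypothesis, gives
$$
|F(z)| \le \|T(f_z, g_z)\|_{L^p(\mu_1)} \|h_z\|_{L^{p'}(\mu_1)} \le M_1 \|f_z\|_{L^{p_1}(\om_1)} \|g_z\|_{L^{p_2}(\nu_1)} = M_1,
$$
and symmetrically $|F(z)| \le M_2$ on $\mathrm{Re}(z) = 1$. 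Hadamard's three-lines theorem then yields $|F(\tht)| \le M_1^{1-\tht} M_2^{\tht}$, which is precisely $|\int T(f, g) h \, d\mu| \le M_1^{1-\tht} M_2^{\tht}$ since $F(\tht) = \int T(f, g) h \, d\mu$.

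The main obstacle is technical: one must verify that $F$ is holomorphic and of admissible growth on the strip, and also that the complex powers of the weights are unambiguously defined. The standard remedy is to replace $\om, \om_j, \nu, \nu_j, \mu, \mu_j$ by their truncations to a set on which all weights lie in $[\de, \de^{-1}]$, establish the bound there with a constant independent of $\de$, and then let $\de \to 0$ via monotone convergence before invoking density of simple functions. This routine step is the sole complication; the rest is a direct bilinear transcription of the classical Stein--Weiss argument.
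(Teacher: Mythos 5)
The paper does not prove this lemma; it cites it from Bergh and L\"ofstr\"om (Theorems 4.4.1 and 5.5.3), and your argument is precisely the Stein--Weiss change-of-measure proof that underlies that reference, so the approach matches. Your analytic families are set up correctly: writing things out, $f_z = f\,(\om_1/\om_2)^{(z-\tht)/p_1}$, one checks $f_\tht=f$, $\|f_{it}\|_{L^{p_1}(\om_1)}=\|f\|_{L^{p_1}(\om)}$, $\|f_{1+it}\|_{L^{p_1}(\om_2)}=\|f\|_{L^{p_1}(\om)}$, and similarly for $g_z,h_z,\mu_z$; the H\"older and three-lines steps are then exactly right, and the $p=1$ case (which is the one the paper actually uses) goes through with $h_z\equiv h\in L^\nf$.

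One point in your write-up is not correct as stated: for simple $f,g,h$ it is \emph{not} true that $F(z)=\int T(f_z,g_z)\,h_z\,\mu_z\,dx$ is a finite sum of terms $a_k e^{b_k z}$. The modulating factors such as $(\om_1/\om_2)^{(z-\tht)/p_1}$ are genuine functions of $x$ (not piecewise constant on the level sets of $f$), so $f_z$ is not a finite combination of fixed functions with exponential coefficients, and since $T$ need not be a pointwise multiplier the composite $T(f_z,g_z)$ certainly is not either. The conclusion you want---$F$ holomorphic on the open strip, continuous and bounded on the closed strip---is still correct, but the justification should go via your truncation device: after restricting to a set where all six weights lie in $[\de,\de^{-1}]$, the map $z\mapsto(f_z,g_z,h_z,\mu_z)$ is a bounded holomorphic $L^{p_1}\times L^{p_2}\times L^{p'}\times L^\nf$-valued family (e.g.\ by differentiating $w^{az+b}$ under the integral sign with dominated convergence), and since $T$ is a bounded bilinear map on these truncated spaces, $F$ is holomorphic and bounded by composition. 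Alternatively, one can further approximate the weights by simple functions so that the finite-exponential-sum description does hold, and pass to the limit; either route closes the gap, but the sentence as written asserts something false and should be amended.
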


Now we prove the claimed  quantitative weighted inequality of the bilinear rough singular integral. We should emphasize again that
our argument is parallel to the one previously used in 
Hyt{\"o}nen, P{\'e}rez, and Rela
 \cite{Hytonen2015} for the linear case.

\begin{proof}[Proof of Theorem~\ref{07052}]
By Lemma \ref{07051}  and 
Lemma \ref{11192}  we know that 
$$\|\tilde T_\ell(f,g)\|_{L^1(\nu)}
\le  C\|\Om\|_\nf[\om_1,\om_2]_{A_{(2,2)}}N(\ell)\|f\|_{L^2(\om_1)}\|g\|_{L^2(\om_2)}
$$
whenever $(\om_1,\om_2)\in A_{(2,2)}$. Moreover, for a fixed $(\om_1,\om_2)
\in A_{(2,2)}$, by Lemma~\ref{08251}, we have 
$(\om_1^{1+r},\om_2^{1+r})\in A_{(2,2)}$
for $r\sim [\om_1,\om_2]^{-2}_{A_{(2,2)}}$, hence
\begin{equation}\label{e11192}
\|\tilde T_\ell(f,g)\|_{L^1(\nu^{1+r})}
\le 
C \|\Om\|_{L^\nf} [\om_1^{1+r},\om_2^{1+r}]_{A_{(2,2)}}
N(\ell)
\|f\|_{L^2(\om_1^{1+r})}\|g\|_{L^2(\om_2^{1+r})}.
\end{equation}

Recall also that by \eqref{e07053} we have $\|\tilde T_\ell\|_{L^2\times L^2\to L^1}\le C\|\Om\|_\nf2^{-N(\ell)\de'}$ for a 
fixed positive
$\de'$ independent of $\|\Om\|_\nf$. 
Interpolating between this and \eqref{e11192}, using Lemma \ref{11194} 
and \eqref{e11191}, we obtain that
\begin{equation}\label{e11193}
\|\tilde T_\ell(f,g)\|_{L^1(\nu)}
\le C\|\Om\|_{L^\nf}[\om_1,\om_2]_{A_{(2,2)}}
N(\ell)^{\tf 1{1+r}}
2^{-\tf{N(\ell) \de'r}{1+r}}\|f\|_{L^2(\om_1)}\|g\|_{L^2(\om_2)}.
\end{equation}
Summing over $\ell\ge 1$, and using the argument on
\cite[p. 19]{Hytonen2012a}
we obtain that
$$\|T_\Om(f,g)\|_{L^1(\nu)}
\le  C[\om_1,\om_2]_{A_{(2,2)}} r^{-1}\|f\|_{L^2(\om_1)}\|g\|_{L^2(\om_2)},$$
which is \eqref{e07056}
and we finish the proof of Theorem~\ref{07052}.

\end{proof}

\begin{rmk}
If $r\sim [\om_1,\om_2]_{A_{(2,2)}}^{-1}$, as we conjectured right after
Remark~\ref{12051},
we obtain the weighted bound $[\om_1,\om_2]_{A_{(2,2)}}^2$,
similar to the result obtained in the linear case \cite{Hytonen2012a}.
However, $r$  can only be $[\om_1,\om_2]_{A_{(2,2)}}^{-2}$.
This indicates that  $[\om_1,\om_2]_{A_{(2,2)}}^3$
may be the limit of our method.

\end{rmk}
\section{Appendix: Proof of Lemma~\ref{07051}}
In this section we sketch the proof of Lemma~\ref{07051}.

We refer the readers to \cite[Lm 3.10]{Hytonen2012a}
for a detailed proof.
Here we just present a few tiny differences worth explaination.

A careful examination of the proof of \cite[Lm 3.10]{Hytonen2012a} shows that once we establish
(3.11) and (3.12) of \cite{Hytonen2012a}, then the remaining argument
follows smoothly.

What we want to estimate is 
$|\sum_kK_k*\vp_{k-N(\ell)}|$.
We see that
$$
K_k*\vp_{k-N(\ell)}(\vec x)=2^{-(k-N(\ell))2n}\int_{|\vec y|\sim 2^k}
\f{\Om((\vec y)')}{|\vec y|^{2n}}
\vp(\f{\vec x-\vec y}{2^{k-N(\ell)}})d\vec y,
$$
where $\vec x,\vec y\in\bbr^{2n}$.

Fix $\vec x$ and assume $|\vec x|=2^l$.
If $l\le k+10$,
then
$$
|2^{-(k-N(\ell))2n}\int_{|\vec y|\sim 2^k}
\f{\Om((\vec y)')}{|\vec y|^{2n}}
\vp(\f{\vec x-\vec y}{2^{k-N(\ell)}})d\vec y|
\le C\|\Om\|_{L^\nf}2^{-2kn}.
$$
If $l\ge k+10$,
then  $|\vp(\f{\vec x-\vec y}{2^{k-N(\ell)}})|$ is bounded
by $C \min(1, 2^{(2n+1)(k-N(\ell)-l)})$,
which implies that
\begin{align*}
&|2^{-(k-N(\ell))2n}\int_{|\vec y|\sim 2^k}
\f{\Om((\vec y)')}{|\vec y|^{2n}}
\vp(\f{\vec x-\vec y}{2^{k-N(\ell)}})d\vec y|\\
\le &C\|\Om\|_{L^\nf}2^{-(k-N(\ell))2n}
2^{-2kn}2^{(2n+1)(k-N(\ell)-l)} 2^{2kn}\\
=&C \|\Om\|_{L^\nf}2^{k-N(\ell)} 2^{-(2n+1)l}.
\end{align*}
Summing over $k$ we obtain
\begin{align*}
&|\sum_kK_k*\vp_{k-N(\ell)}|\\
\le &\sum_{k\ge l-10}C\|\Om\|_{L^\nf}2^{-2kn}+\sum_{k\le l-10}
C \|\Om\|_{L^\nf}2^{k-N(\ell)} 2^{-(2n+1)l}\\
\le &C\|\Om\|_{L^\nf} [2^{-2ln}+2^{-(2n+1)l}2^{-N(\ell)}2^{l}]\\
\le & C\|\Om\|_{L^\nf}|\vec x|^{-2n}.
\end{align*}

Similarly we can prove that
$$
|\nabla(\sum_{k}K_k*\vp_{k-N(\ell)}(\vec x))|
\le C\|\Om\|_{L^\nf}\tf{2^{N(\ell)}}{|\vec x|^{2n+1}}.
$$

Notice 
that the kernel $\tilde K_\ell(x,y,z)$ of $\tilde T_\ell$ is 
$$\sum_kK_k*[\vp_{k-N(\ell)}-\vp_{k-N(\ell-1)}]((x,x)-(y,z)),$$
so a routine argument implies
\eqref{e07051}
and \eqref{e07052}.
This completes the proof of Lemma~\ref{07051}.



\end{document}